\documentclass[12pt]{amsart}
\usepackage{amsmath}
\usepackage{amsfonts}
\usepackage{amsthm}
\usepackage{amssymb}
\usepackage{amscd}
\usepackage{epic}
\usepackage{eepic}
\usepackage{bigints}


\newcommand{\printname}[1]
  {\smash{\makebox[0pt]{pace{-2.0in}\raisebox{8pt}{\tiny #1}}}}
%

%
%

\newtheorem {Theorem}   {Theorem}
\newtheorem {Lemma} {Lemma} [section]
\newtheorem {Proposition}[Lemma]{Proposition}

\theoremstyle{definition}

\theoremstyle{remark}

\newtheorem {Corollary}[Lemma]{Corollary}
\newcommand {\cal}[1]  {{\mathcal{#1}}}
\newcommand{\ssminus}{{\smallsetminus}}


\newcommand{\nab}[1][]{\ensuremath{\mathrm{\nabla}{#1}}}




\addtolength{\oddsidemargin}{-.5in}
\addtolength{\evensidemargin}{-.5in}
\addtolength{\textwidth}{1in}
\addtolength{\textheight}{1in}

\newcommand{\be}{\begin{equation}}
\newcommand{\ee}{\end{equation}}



\newcommand{\lb}[1]{\label{#1}}





\newcommand{\bet}{\beta}
\newcommand{\al}{\alpha}
\newcommand{\om}{\omega}

\newcommand{\we}{\wedge}

\newcommand{\Ref}[1]{(\ref{#1})}
\newcommand{\fr}{\frac}

\newcommand{\ra}{\rightarrow}

\newcommand{\ri}{\mathrm{r}}

\newcommand{\Lap}{\Delta}

\newcommand{\wht}{\widehat}
\newcommand{\sk}{SKR}
\def\t{\tau}

\newcommand{\sols}{Ricci solitons}

\newcommand{\cc}{\lambda}








\begin{document}
\title{Almost soliton duality}
\author{Gideon Maschler}


\address{Department of Mathematics and Computer Science, Clark University,
Worcester, Massachusetts 01610, U.S.A.}
\email{gmaschler@clarku.edu}



\begin{abstract}
Gradient Ricci almost solitons were introduced by Pigola, Rigoli, Rimoldi and Setti \cite{PRRS}.
They are defined as solitons except that the metric coefficient is allowed to be
a smooth function rather than a constant. It is shown that any almost soliton is conformal
to another almost soliton having a soliton function which is minus the original one. Uniqueness,
and the case where both the source and target are solitons, are studied. Completeness of the target
metric is also examined in the case where the source is K\"ahler and admits a special
K\"ahler-Ricci potential in the sense of \cite{local,skrp}.
\end{abstract}
\keywords{
almost soliton, Ricci soliton, K\"ahler, conformal}
\subjclass[2010]{Primary 53C25; Secondary 53C55, 53B35}

\maketitle

\setcounter{Theorem}{0}
\renewcommand{\theTheorem}{\Alph{Theorem}}
\renewcommand{\theequation}{\arabic{section}.\arabic{equation}}
\thispagestyle{empty}
\section{Introduction}
\setcounter{equation}{0}

A gradient Ricci soliton on a manifold $M$ is a Riemannian metric $g$ satisfying
\be\lb{sol}\ri+\nabla df=\cc g,\end{equation} where $\ri$ denotes the Ricci curvature of $g$, $\nabla df$
stands for the $g$-Hessian of a smooth function $f$ and $\cc$ is a constant. We call $f$ the soliton
function, $\cc$ the metric coefficient and $(g,f)$ a soliton pair. If $f$ is constant the soliton is deemed trivial.

Ricci solitons have been intensively studied in recent years (cf. \cite{elm,ca1,pw}), and
their importance derives in part from the role they play in the study of the Ricci flow. More recently,
the more general notion of a gradient Ricci {\em almost} soliton was introduced by Pigola, Rigoli, Rimoldi
and Setti \cite{PRRS}, and further studied in \cite{barrib} and \cite{bbr}. We will often employ the term
``almost soliton" for brevity. Almost solitons and almost
soliton pairs are also defined via \Ref{sol}, but with the metric coefficient $\cc$ an arbitrary smooth
function. In this case we call $(g,f)$ an almost soliton pair, yet $f$ is still called the soliton function.
A more general notion considered in \cite{confsol} is that of a pair $(g,f)$ as above satisfying a Ricci-Hessian
equation \be\lb{ric-hes}\ri+\al\nab df=\cc\, g, \end{equation} in which the coefficients $\al$, $\cc$ are smooth
functions.

The classical problem of determining whether an Einstein manifold can be mapped conformally onto another
Einstein manifold goes back to Brinkmann \cite{brnk}, and was addressed by many authors (see \cite{kuhn}).
In work carried out very recently, the corresponding question for other metric types has been taken up
by Jauregui and Wylie in \cite{GQE}. In the context of generalized quasi-Einstein metrics, a category which includes almost solitons,
it was found that under certain assumptions a strict classification holds, with metrics having such a ``dual" metric
being certain warped products with a one dimensional base. An important assumption that was made in this classification
is that the conformal diffeomorphism preserves, in a certain sense, the generalized quasi-Einstein structure. Remarkably,
we show in this work that relaxing this condition just a little completely annuls the above-mentioned strictness.
Namely, in the context of almost solitons and a single conformal class, we find a ``dual" almost soliton
conformal to {\em any} given one (and, in particular, to any given Ricci soliton). In particular, this gives
many new examples of Ricci almost solitons.

More precisely, we have,
\begin{Theorem}\lb{thm}
In the conformal class of an almost soliton $g$ on a manifold $M$ of dimension $n>2$, there exists another almost soliton $\wht{g}$,
which is nonhomothetic to $g$ if $g$ is nontrivial. The metric $\wht{g}$ is unique among almost solitons in the
conformal class for which both their conformal factor to $g$ and their soliton function are smooth functions
of the soliton function of $g$. If $g$ is complete and nontrivial, $g$ and $\wht{g}$ cannot both be gradient Ricci solitons.
\end{Theorem}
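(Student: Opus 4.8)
The plan is to produce the dual explicitly as a conformal rescaling $\wht{g}=e^{2\varphi}g$ whose soliton function is $\hat f=-f$, as the abstract advertises. The starting point is the pair of conformal transformation laws valid for $n>2$,
\[
\wht{\ri}=\ri-(n-2)\bigl(\nabla d\varphi-d\varphi\otimes d\varphi\bigr)-\bigl(\Lap\varphi+(n-2)|\nabla\varphi|^2\bigr)g,\qquad \wht{\nabla}\,dh=\nabla dh-2\,d\varphi\odot dh+\langle\nabla\varphi,\nabla h\rangle\,g,
\]
the second holding for every function $h$. Substituting these, together with $\ri=\cc\,g-\nabla df$, into the putative equation $\wht{\ri}+\wht{\nabla}\,d(-f)=\wht{\cc}\,\wht{g}$ and imposing the natural ansatz $\varphi=\varphi(f)$, I would split the result into a part proportional to $\nabla df$, a part proportional to $df\otimes df$, and a pure-trace part. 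A short computation gives the coefficient of $\nabla df$ as $-2-(n-2)\varphi'$, and shows that the $df\otimes df$-coefficient vanishes automatically once this one does; hence $\varphi'=-2/(n-2)$, i.e. $\varphi=-\tfrac{2}{n-2}f$ up to an additive constant, annihilates the entire trace-free part. The surviving pure-trace identity then \emph{defines} $\wht{\cc}$ as an explicit smooth function, so $\wht{g}$ is the asserted almost soliton. Since $\varphi$ is a nonconstant multiple of $f$, the factor $e^{2\varphi}$ is nonconstant precisely when $f$ is nonconstant, so $\wht{g}$ is nonhomothetic to $g$ exactly when $g$ is nontrivial.

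For uniqueness I would rerun the computation with a general competitor $\ti{g}=e^{2\psi}g$ satisfying the hypotheses, namely $\psi=\psi(f)$ and soliton function $\ti{f}=\ti{f}(f)$. The trace-free part of its almost soliton equation is again a combination $A\,\nabla df+B\,df\otimes df$ that must be pure-trace, with $A=-1-(n-2)\psi'+\ti{f}'$ and $B$ a polynomial expression in $\psi',\psi'',\ti{f}',\ti{f}''$. On the dense open set where $df\neq0$ and the trace-free parts of $\nabla df$ and $df\otimes df$ are linearly independent — the generic situation for a nontrivial soliton function — this forces $A=0$ and $B=0$. Eliminating $\ti{f}$ through $A=0$ (which yields $\ti{f}''=(n-2)\psi''$) reduces $B=0$ to the ODE $\psi'\bigl((n-2)\psi'+2\bigr)=0$. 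The branch $\psi'\equiv0$ is homothetic to $g$, while $\psi'\equiv-2/(n-2)$ reproduces $\varphi$ and forces $\ti{f}'=-1$, i.e. $\ti{f}=-f$; thus $\ti{g}$ coincides with $\wht{g}$ up to the constant rescaling coming from the additive freedom in $\varphi$. The delicate point here is precisely the pointwise independence of $\nabla df$ and $df\otimes df$, which I would establish on $\{df\neq0\}$ and propagate using that $A,B$ depend on the point only through $f$.

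For the final assertion, assume for contradiction that $g$ is complete and nontrivial and that both $g$ and $\wht{g}$ are genuine gradient Ricci solitons, so that $\cc$ and $\wht{\cc}$ are \emph{constants}. From the pure-trace identity one records
\[
\wht{\cc}=e^{\frac{4}{n-2}f}\Bigl[\cc+\tfrac{2}{n-2}\bigl(\Lap f-|\nabla f|^2\bigr)\Bigr].
\]
Now I would feed in the soliton identities for $g$: the trace $\Lap f=n\cc-R$ and Hamilton's conservation law $|\nabla f|^2=2\cc f-R+C$. The scalar curvature $R$ cancels, leaving $\Lap f-|\nabla f|^2=n\cc-2\cc f-C$, which is affine in $f$; hence $\wht{\cc}=e^{\frac{4}{n-2}f}P(f)$ with $P$ affine. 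Constancy of $\wht{\cc}$ along the nondegenerate interval $f(M)$ then forces an affine function of $f$ to equal a multiple of $e^{\frac{4}{n-2}f}$, and differentiating once (using that $f(M)$ is a nondegenerate interval) shows this is possible only if $\cc=0$, $C=0$ and $\wht{\cc}=0$. Thus $g$ would be a \emph{steady} soliton with $R=-|\nabla f|^2\le0$.

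The main obstacle is exactly this residual steady case, and it is where completeness is genuinely needed: the algebra rules out the shrinking and expanding cases outright, but not the steady one. To close the argument I would invoke the well-known lower bound $R\ge0$ valid for every complete gradient steady Ricci soliton, which combined with $R\le0$ yields $R\equiv0$ and hence $\nabla f\equiv0$, contradicting the nontriviality of $g$. (Here ``$\wht{g}$ is a gradient Ricci soliton'' is understood through its canonical dual structure, with potential $-f$ and coefficient $\wht{\cc}$.) I expect the only subtlety beyond bookkeeping to be the justification that the exponential-equals-affine reduction is exhaustive, which rests on $f(M)$ being a full interval — a consequence of connectedness of $M$ and nonconstancy of $f$.
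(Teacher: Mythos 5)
Your proposal is correct and follows essentially the same route as the paper: existence via the conformal factor $e^{-4f/(n-2)}$ with soliton function $-f$, obtained by matching the coefficients of $\nabla df$ and $df\otimes df$ (Proposition \ref{duality}); uniqueness via the same two coefficient ODEs, which you solve by the factorization $\psi'\bigl((n-2)\psi'+2\bigr)=0$ where the paper instead subtracts the derivative of the first equation from the second --- equivalent bookkeeping; and the soliton-to-soliton obstruction via Hamilton's identity, the exponential-versus-affine argument forcing $\cc=\wht{\cc}=0$ (and the constant $k=0$), and Chen's bound $R\geq 0$ for complete steady solitons to eliminate the residual steady case (Proposition \ref{c-hat-c}).

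One caveat concerning the ``delicate point'' you flag in the uniqueness step: the pointwise linear independence of the trace-free parts of $\nabla df$ and $df\otimes df$ on $\{df\neq 0\}$ cannot in fact be ``established'' in general. For the Gaussian shrinker ($g$ Euclidean, $f=|x|^2/4$) one has $\nabla df=\frac 12 g$, so the $\nabla df$-term is absorbed into the pure-trace part and only the single equation $B=0$ survives, which is underdetermined in $(\psi,\ti{f})$; more generally this degeneracy occurs whenever $\nabla df$ lies pointwise in the span of $g$ and $df\otimes df$, as it does for warped products over a one-dimensional base. The paper's proof of Proposition \ref{unq} simply asserts both coefficient equations without comment, so your attempt is, if anything, more candid than the paper on this point; but your plan to prove the independence lemma would fail on such examples, and that degenerate case is covered by neither your argument nor the paper's.
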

In terms of almost soliton pairs,
the dual is given by \be\lb{inv}\text{$(g,f)\ra (\wht{g},\wht{f}):=(e^{-\frac {4f}{n-2}}g,-f)$.}\end{equation}
This should be compared with \be\lb{old-dual} (g,f)\ra (f^{-2}g,f^{-1}).\end{equation}
Both are involutions on the space of pairs, suitably defined. The latter was important in
\cite{confsol} and has an invariant subset consisting of pairs satisfying a Ricci-Hessian equation,
while \Ref{inv} possesses, by Theorem \ref{thm}, a smaller invariant set consisting of gradient Ricci almost soliton pairs.
The relation between the two involutions can be seen by analogy.
The equation describing the Ricci tensor of a metric conformal to a gradient Ricci almost soliton
contains the Hessians of both the conformal factor and the soliton function of the latter metric.
In this formula these Hessians have the following interchangeable role.
Each of the conformal changes in \Ref{old-dual} and \Ref{inv} implies a particular functional relation between the conformal
factor and the soliton function,  which reduces the equation to a simpler one, namely a Ricci-Hessian equation
in one case (with the Hessian of the conformal factor appearing in one of the terms), and an almost soliton
equation in the other case  (with one of the terms being the Hessian of the soliton function).

It is interesting to note that the square root of the conformal factor in \Ref{inv}, namely $e^{-\frac {2f}{n-2}}$, has been
employed in a number of works as an important conformal factor in its own right (aside from \cite {GQE}, see \cite{Weyl-GQE}, \cite{Zhang},
and also in the very recent \cite{PhoSoStu}). Its effect on the first equation mentioned in the previous paragraph,
is a simplification resulting in an equation involving,
instead of a Hessian, the tensor product of a one form with itself, the one form being the differential of the soliton function.

The study of both involutions can be applied to the case where $g$ is K\"ahler. However, we will review the fact
that any gradient K\"ahler-Ricci almost soliton is, in fact, a gradient K\"ahler-Ricci soliton. In the case of
a K\"ahler metric conformal to a gradient Ricci almost soliton, a pertinent role is played by a potential for a
Killing vector field. For the involution \Ref{old-dual}, this Killing potential is just the square root $\t$ of
the conformal factor, while for involution \Ref{inv} it is the soliton function $f$. Both of these are the extreme
special cases in the moduli space considered in \cite{and-pol}, a work written in Polish. There, in the context of
K\"ahler metrics conformal to gradient Ricci solitons, both $\t$ and $f$ are regarded as essentially arbitrary functions of
some Killing potential.

One consequence of Theorem \ref{thm} is that a complete gradient K\"ahler-Ricci soliton cannot have an image $\wht{g}$ under the
involution \Ref{inv}, which is K\"ahler with respect to some complex structure. This is in sharp contrast with the
behaviour of the  involution \Ref{old-dual}, as was shown in \cite{confsol}.

As mentioned above, the duality presented here furnishes many new examples of gradient Ricci almost solitons, for example the duals to
known gradient Ricci solitons. After proving Theorem \ref{thm}
via Propositions \ref{duality}-\ref{c-hat-c}, we study in the final section the completeness question for the dual $\wht{g}$, in
the case where $g$ is a gradient K\"ahler-Ricci soliton which is at the same time a metric with a special K\"ahler-Ricci potential,
in the sense of \cite{local}. These solitons belong to a type studied by Koiso \cite{ko}, and also Cao \cite{ca}. In some cases
$\wht{g}$ is complete.

The author thanks the referee for various suggestions for improving the style of this work.

\section{Duality and \sols}\lb{rds}
\setcounter{equation}{0}

\subsection{Conformal changes}
Let $(M,g)$ be a Riemannian manifold of dimension $n$, and
$\t:M\to {\mathbb{R}}\,$ a $\,C^\infty$ function. We write metrics
conformally related to $g$ in the form $\wht{g}=g/\t^2$, and let $\nab$, $\wht{\nab}$
denote the corresponding Levi-Civita connections as well as the metric gradient operators.
The Koszul formula,
\begin{multline*}
2\wht{g}(\wht{\nab}_wv,u)=d_w[\wht{g}(v,u)]+d_v[\wht{g}(w,u)]-d_u[\wht{g}(w,v)]\\
+\wht{g}(v,[u,w])+\wht{g}(u,[w,v])-\wht{g}(w,[v,u])
\end{multline*}
for smooth vector fields $u$, $v$, $w$, with $d_u$ etc. denoting directional derivatives,
yields the following expression for the $\wht{g}$-Hessian of a
function $f:M\ra \mathbb{R}$
\begin{equation}\lb{hess}
\begin{array}{l}
\wht\nabla df\,=\,\nabla df\,+\,\t^{-1}[2\,d\t\odot df
-\,g(\nab\t,\nab f)g],
\end{array}
\end{equation}
where $d\t\odot df=(d\t\otimes df+df\otimes d\t)/2$ with $\otimes$ denoting the tensor product.
We will be concerned primarily with the case where $df\we d\t=0$, i.e., at points where
$df\neq 0$, $\t$ is given locally as a composition $\t=H\circ f$ for some smooth
function $H:\mathbb{R}\ra\mathbb{R}$. In this case,
\Ref{hess} becomes \be\lb{hes-f}\wht\nabla df\,=\,\nabla df\,+\,2\t^{-1}\t'\,df\otimes df
-\,\t^{-1}\t'|\nab f|^2g,\end{equation}
with
$'$ denoting differentiation with respect to $f$ and $|\cdot|$ is the $g$-norm.


Also recall the conformal change expression relating the
Ricci tensors of $g$ and $\wht{g}$, with $\Lap$ denoting the Laplace operator:
\be\lb{Ricci}
\wht{\ri}\,=\,\ri\,+\,(n-2)\,\t^{-1}\nabla d\t\,+\,
\left[\t^{-1}\Delta\t\,-\,(n-1)\,\t^{-2}|\nab\t|^2\right]g.\\
\end{equation}
In the case where $\t$ depends locally on $f$, we write the second term on the right in terms of $f$, obtaining
\be\lb{Ricci2}
\wht{\ri}\,=\,\ri\,+\,(n-2)\,\t^{-1}\t'\nabla df\,+(n-2)\,\t^{-1}\t''df\otimes df\,+\,
\left[\t^{-1}\Delta\t\,-\,(n-1)\,\t^{-2}|\nab\t|^2\right]g.\\
\end{equation}

\subsection{Duality for almost solitons}\lb{dual}
With $M$, $g$, $f$ and other notations as above, we consider the case where the pair $(g,f)$
forms an almost soliton pair.

\begin{Proposition}\lb{duality}
Let $M$ be a manifold of dimension $n>2$, and suppose $(g,f)$ denotes
an almost soliton pair satisfying \Ref{sol}. Then $(\wht{g},\wht{f}):=(g/\t^2,-f)$,
with $\t = {\displaystyle e^{\frac {2f}{n-2}}}$, is also an almost
soliton pair. The latter pair satisfies $\wht{\ri}+\wht{\nab} d\wht{f} =\wht{\cc}\, \wht{g}$, with
\be\lb{dual-coef}\quad\wht{\cc}=\t^2(\cc+\bet+\delta),
\end{equation}
where $\bet=\t^{-1}\Delta\t\,-\,(n-1)\,\t^{-2}|\nab\t|^2$ and $\delta=\t^{-1}\t'|\nab f|^2$
denote the coefficients of $g$ in \Ref{Ricci} and \Ref{hes-f}, respectively.
\end{Proposition}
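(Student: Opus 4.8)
The plan is to substitute the conformal-change formulas \Ref{Ricci2} and \Ref{hes-f} directly into the target expression $\wht\ri + \wht\nabla d\wht f$ and check that, for the specific exponent in $\t = e^{2f/(n-2)}$, everything collapses to a multiple of $g$. The first reduction is that $\wht f = -f$, so $\wht\nabla d\wht f = -\wht\nabla df$, and I only need the $\wht g$-Hessian of $f$ itself, which \Ref{hes-f} supplies since $\t$ is by construction a function of $f$ (so $df\we d\t = 0$ holds automatically). Before combining, I would record the two derivative identities forced by the choice of exponent: differentiating $\t = e^{2f/(n-2)}$ with respect to $f$ gives $\t' = \frac{2}{n-2}\t$ and $\t'' = \left(\frac{2}{n-2}\right)^{2}\t$, so that $\t^{-1}\t' = \frac{2}{n-2}$ is constant and $(n-2)\,\t^{-1}\t'' = 2\,\t^{-1}\t'$.

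Next I would assemble $\wht\ri - \wht\nabla df$ by adding \Ref{Ricci2} and $-1$ times \Ref{hes-f}, sorting the outcome into three kinds of terms: the Hessian $\nabla df$, the tensor square $df\otimes df$, and multiples of $g$. The coefficient of $\nabla df$ is $(n-2)\,\t^{-1}\t' - 1 = 2 - 1 = 1$, so the Hessian survives with coefficient exactly one and pairs with the Ricci term to form the left-hand side $\ri + \nabla df$ of the soliton equation \Ref{sol}. The coefficient of $df\otimes df$ is $(n-2)\,\t^{-1}\t'' - 2\,\t^{-1}\t'$, which vanishes precisely by the second derivative identity above; this cancellation is the crux of the argument and is exactly what motivates the specific exponent $2f/(n-2)$. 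The remaining $g$-terms are the coefficient $\bet$ coming from \Ref{Ricci2} together with $\t^{-1}\t'|\nabla f|^{2} = \delta$ coming from \Ref{hes-f}.

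Collecting these and invoking the almost-soliton hypothesis $\ri + \nabla df = \cc g$ from \Ref{sol} yields $\wht\ri + \wht\nabla d\wht f = (\cc + \bet + \delta)\,g$. The final step is purely algebraic: since $\wht g = g/\t^{2}$, I replace $g$ by $\t^{2}\wht g$ to obtain $\wht\ri + \wht\nabla d\wht f = \t^{2}(\cc + \bet + \delta)\,\wht g$, which is the almost-soliton equation for $(\wht g,\wht f)$ with metric coefficient $\wht\cc = \t^{2}(\cc + \bet + \delta)$, as claimed in \Ref{dual-coef}. The only point demanding care is the bookkeeping of the two Hessian-type tensors; once the exponent is fixed there is no genuine obstacle, since the entire content of the proposition is the algebraic identity that makes the $df\otimes df$ contribution disappear.
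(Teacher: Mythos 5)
Your proposal is correct and follows essentially the same route as the paper's own proof: substitute \Ref{Ricci2} and \Ref{hes-f} into $\wht{\ri}+\wht{\nab} d\wht{f}$, observe that the choice $\t=e^{\frac{2f}{n-2}}$ makes the coefficient of $\nabla df$ equal to $1$ and annihilates the $df\otimes df$ term, then apply \Ref{sol} and rewrite $g=\t^2\wht{g}$. The derivative identities you record ($\t^{-1}\t'=\tfrac{2}{n-2}$, $(n-2)\,\t^{-1}\t''=2\,\t^{-1}\t'$) are exactly the cancellations used in the paper's displayed computation.
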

At times $\wht{g}$ will be referred to as the almost soliton dual to $g$, and \Ref{inv} will then be called
a duality.
\begin{proof}
Noting that $\wht{f}=-f$, one has,
by \Ref{Ricci2} and \Ref{hes-f},
\begin{eqnarray*}
\wht{\ri}+\,\wht{\nabla} d\wht{f}&=&
\ri\,
+\,(n-2)\,\t^{-1}\t'\nabla df\,+(n-2)\,\t^{-1}\t''df\otimes df\,+\bet g\\
&+&\,\left(-\nabla df\,-\,2\t^{-1}\t'\,df\otimes df
+\,\delta g\right)\\
&=&\,\ri\,+((n-2)\t^{-1}\t'-1 )\,\nabla df
+\left((n-2)\,\t^{-1}\t''-\,2\t^{-1}\t'\right) df\otimes df\\
&+&(\bet+\delta)\t^2\wht{g}\\
&=&\ri\,+(2-1)\,\nabla df+\t^{-1}\left(\fr {4}{n-2}-\fr {4}{n-2}\right) df\otimes df
+(\bet+\delta)\t^2\wht{g}\\
&=&
\ri\,+\,\nabla df+(\bet+\delta)\t^2\wht{g}=(\cc+\bet+\delta)\t^2\wht{g},
\end{eqnarray*}
where in the penultimate equality we have used the expression for $\t$ as a function of $f$.
\end{proof}
\subsection{Uniqueness}
We show here that among all choices of a conformal factor $\t$ and a soliton function $\wht{f}$
in which both are functions of the initial soliton function $f$, (essentially) only the combination of these
two given in \Ref{inv} gives rise to a dual almost soliton.
\begin{Proposition}\lb{unq}
On a manifold $M$ of dimension $n>2$, let $g$ be a gradient Ricci almost soliton with soliton function $f$.
Suppose $\t (f)$, $k(f)$ are smooth functions of a real variable, regarded as functions on $M$ via composition
with $f$, and $\t (f)$ is nonconstant. Assume $(\t (f))^{-2}g$ is a gradient Ricci almost soliton with soliton
function $\wht{f}=k(f)$. Then $\t (f)=e^{\fr {2f}{n-2}}$ and $k(f)=-f$
up to an additive or, respectively, a multiplicative constant.
\end{Proposition}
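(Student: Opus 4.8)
The plan is to compute the tensor $\wht{\ri}+\wht{\nabla}d\wht{f}$ for arbitrary smooth $\t=\t(f)$ and $\wht{f}=k(f)$, and then read off the constraints imposed by requiring this tensor to be a pointwise multiple of $\wht{g}$, hence of $g$. First I would apply \Ref{Ricci2} to express $\wht{\ri}$ in terms of $\ri$, $\nabla df$, $df\otimes df$ and $g$, and apply the general Hessian formula \Ref{hess} (using the chain rule $\nabla d(k(f))=k'\nabla df+k''\,df\otimes df$ together with $d\t=\t'\,df$) to express $\wht{\nabla}d\wht{f}$ in the same four tensors. Substituting the almost soliton equation $\ri=\cc g-\nabla df$ for $g$ then eliminates $\ri$, leaving an identity of the form
\[
A\,\nabla df+B\,df\otimes df=\Phi\,g,
\]
where $A=(n-2)\t^{-1}\t'+k'-1$ and $B=(n-2)\t^{-1}\t''+k''+2\t^{-1}\t'k'$ are functions of $f$ alone (primes denoting $d/df$), while $\Phi$ absorbs $\cc$, $\wht{\cc}$, $\bet$ and the $|\nab f|^2$ terms and is entirely unconstrained, since the metric coefficient $\wht{\cc}$ of the target is a free smooth function.

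The core of the argument is to conclude from this identity that $A\equiv 0$ and $B\equiv 0$. Restricting to the open set where $df\neq 0$ and passing to trace-free parts (written with a subscript $0$) kills the pure-trace term $\Phi g$ and gives $A\,(\nabla df)_{0}=-B\,(df\otimes df)_{0}$; at a point where $\nabla df$, $df\otimes df$ and $g$ are linearly independent this forces $A=B=0$, and because $A,B$ depend on $f$ only, the vanishing then propagates along the range of $f$. I expect this independence step to be the main obstacle. For almost solitons whose Hessian is itself of the special form $\nabla df=P g+Q\,df\otimes df$ (for instance rotationally symmetric or warped-product examples over an Einstein fibre), the three tensors are dependent and the identity collapses to the single relation $AQ+B=0$, which does not by itself pin down $A$. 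Securing the degenerate locus — by a nondegeneracy hypothesis, by continuity from the generic locus, or by invoking an additional integrability identity for almost solitons (such as the divergence relation linking $\nab\cc$, $\nab\Lap f$ and $\ri(\nab f)$) — is the delicate point that must be handled before the ODE analysis applies.

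Once $A\equiv 0$ and $B\equiv 0$ are in hand, the remainder is routine. The relation $A=0$ gives $k'=1-(n-2)\t^{-1}\t'$, and differentiating yields $k''=-(n-2)\t^{-1}\t''+(n-2)\t^{-2}(\t')^{2}$. Substituting both expressions into $B=0$ cancels the $\t''$ terms and leaves
\[
\frac{\t'}{\t}\Big(2-(n-2)\frac{\t'}{\t}\Big)=0 .
\]
Since $\t$ is nonconstant, $\t'/\t=2/(n-2)$ wherever $\t'\neq 0$, whence $\t=c\,e^{2f/(n-2)}$ for a multiplicative constant $c$; feeding this back into the expression for $k'$ gives $k'=-1$, i.e. $k(f)=-f$ up to an additive constant. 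This reproduces the duality \Ref{inv} and establishes the claimed uniqueness, with $\Phi$ then fixing $\wht{\cc}$ exactly as in \Ref{dual-coef}.
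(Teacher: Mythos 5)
Your proposal follows the paper's proof essentially step for step: the same expansion of $\wht{\ri}+\wht{\nabla}d\wht{f}$ via \Ref{Ricci2} and \Ref{hess}, elimination of $\ri$ by \Ref{sol}, the same two coefficient equations (your $A=0$, $B=0$ are exactly the paper's displayed pair), and an equivalent integration: the paper subtracts the $f$-derivative of the first equation from the second, which is the same elimination you perform by substituting $k'$ and $k''$; both routes give $k'=-1$ and $(\log\tau)'=2/(n-2)$. Both versions also share the same small point that $\tau'\ne 0$ holds a priori only on an open set, which is handled by the openness/closedness remark you sketch.

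The step you single out as the main obstacle is precisely the step the paper performs without comment: reading off the vanishing of the coefficients of $\nabla df$ and $df\otimes df$ presupposes that $\nabla df$, $df\otimes df$ and $g$ are pointwise linearly independent. Your caution is not excessive, and none of the repairs you propose (continuity from a generic locus, integrability identities) can close the gap unconditionally, because the degenerate case you describe actually occurs and the stated conclusion fails on it. On the round sphere $S^n$ with $f$ a first spherical harmonic one has $\nabla df=-f\,g$, so $(g,f)$ is a nontrivial almost soliton pair with $\cc=(n-1)-f$, and the three tensors are dependent everywhere; the identity then forces only $B=0$, a single ODE linking $\tau$ and $k$. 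That ODE has many solutions besides the dual pair: for instance $\tau=af+b$ and $k=(cf+d)/(af+b)$ satisfy $B=(n-2)\tau^{-1}\tau''+k''+2\tau^{-1}\tau'k'=0$ (since $\tau''=0$ and $k'=(cb-ad)\tau^{-2}$), and then $\tau^{-2}g$ is a gradient Ricci almost soliton with soliton function $k(f)$ --- indeed $\tau^{-2}g$ is again a round, hence Einstein, metric, being a M\"obius conformal rescaling --- while $\tau$ is not a multiple of $e^{2f/(n-2)}$. So the independence you flagged is a genuine implicit hypothesis of the proposition rather than something derivable from its stated assumptions; granting it (say, independence at a set of points whose $f$-values are dense in the range of $f$), your argument is complete and coincides with the paper's.
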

\begin{proof}
A computation similar to the one in Proposition \ref{duality} yields that for a dual almost soliton
we have, for the coefficients of $\nab df$ and $df\otimes df$, the following two equations, respectively:
$$(n-2)\t^{-1}\t'+k'=1,\quad  (n-2)\t^{-1}\t''+k''+2\t^{-1}\t'k'=0.$$
Subtracting the derivative of the first equation from the second equation yields, after
dividing by the nonzero coefficient $\t'$ and rearranging, the equation \be\lb{temp}(\log\t)'=-\fr 2{n-2}k'.\end{equation}
Substituting this in the first equation yields $k'=-1$, so that $k(f)=-f$ up to an additive constant,
and then from \Ref{temp} we have $\t(f)=e^{\fr {2f}{n-2}}$, up to a multiplicative constant.
\end{proof}

\subsection{The mapping problem for solitons}

We now consider whether the image of a gradient Ricci soliton under the map \Ref{inv} can itself be a soliton.
It will turn out that this can happen only rarely. First, note that a gradient Ricci soliton satisfies
\be\lb{sol-const}\Lap f-|\nab f|^2=-2\cc f+k\end{equation}
for a constant $k$, with $f$ and $\cc$ as in \Ref{sol}.
This relation given by Hamilton (cf. \cite{R0}) is deduced by combining the trace of the soliton equation with a
computation involving the Ricci identity.

Recall that a gradient Ricci soliton is called steady if its metric coefficient function vanishes.
\begin{Proposition}\lb{c-hat-c}
Let $M$ be a manifold of dimension $n>2$. If, on $M$,
the image of a gradient Ricci soliton pair $(g,f)$ satisfying \Ref{sol} under the map \Ref{inv} is also a gradient Ricci soliton
pair, then either $f$ is constant or both metrics are incomplete steady solitons and the constant $k$ of \Ref{sol-const} is zero.
\end{Proposition}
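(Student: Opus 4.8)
The plan is to read off from Proposition \ref{duality} precisely when the dual metric coefficient $\wht\cc$ of \Ref{dual-coef} can be constant. First I would make $\bet$ and $\delta$ explicit for the distinguished factor $\t=e^{2f/(n-2)}$. Since $\t'=\tfrac{2}{n-2}\t$ and $\t''=\tfrac{4}{(n-2)^2}\t$, and $\Lap\t=\t'\Lap f+\t''|\nab f|^2$, a short computation collapses the two scalar coefficients to $\bet+\delta=\tfrac{2}{n-2}(\Lap f-|\nab f|^2)$. Substituting Hamilton's relation \Ref{sol-const}, $\Lap f-|\nab f|^2=-2\cc f+k$, into \Ref{dual-coef} then yields
\[
\wht\cc=e^{\frac{4f}{n-2}}\Bigl(\cc+\tfrac{2}{n-2}(-2\cc f+k)\Bigr),
\]
exhibiting $\wht\cc$ as a single smooth function of $f$.

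Next I would impose constancy of $\wht\cc$. If $f$ is nonconstant it takes all values in some interval, on which this function of $f$ must be constant, so its $f$-derivative vanishes there. Differentiating and dividing out the nowhere-vanishing exponential, the constant-in-$f$ contributions proportional to $\cc$ cancel, leaving $-4\cc f+2k\equiv 0$ on that interval. An affine function of $f$ vanishing identically has both coefficients zero, forcing $\cc=0$ and $k=0$. Hence $g$ is steady; and putting $\cc=k=0$ back gives $\wht\cc=0$, so $\wht g$ is steady as well.

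It remains to exclude completeness, which I expect to be the main obstacle, since it is the one point requiring input beyond the conformal identities. With $\cc=0$ the trace of \Ref{sol} is $R+\Lap f=0$, while \Ref{sol-const} with $\cc=k=0$ reads $\Lap f=|\nab f|^2$; together these give $R+|\nab f|^2=0$ for the scalar curvature $R$ of $g$. Here I would invoke the known fact that a complete gradient steady Ricci soliton has nonnegative scalar curvature. If $g$ were complete, $R\ge 0$ would force $|\nab f|^2\equiv 0$, i.e.\ $f$ constant, against our hypothesis; so $g$ is incomplete. Finally, since \Ref{inv} is an involution, $(\wht g,\wht f)$ is itself a steady soliton whose image under \Ref{inv} is the soliton $(g,f)$ and whose soliton function $\wht f=-f$ is nonconstant; applying the argument just given to $(\wht g,\wht f)$ shows that $\wht g$ is incomplete too, with its own Hamilton constant vanishing.
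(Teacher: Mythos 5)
Your proposal is correct and follows essentially the same route as the paper: computing $\wht{\cc}$ explicitly as $e^{\frac{4f}{n-2}}\bigl(\cc+\tfrac{2}{n-2}(\Lap f-|\nab f|^2)\bigr)$, combining with Hamilton's identity \Ref{sol-const}, differentiating to force $\cc=\wht{\cc}=0$ and $k=0$, then invoking Chen's nonnegativity of scalar curvature for complete steady solitons together with the involutive symmetry of \Ref{inv} to rule out completeness of either metric. The only cosmetic difference is that you extract $k=0$ during the differentiation step, whereas the paper recovers it at the end by substituting $\cc=\wht{\cc}=0$ back into \Ref{lap-equal}.
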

\begin{proof}
The proof resembles Proposition 6.1 in \cite{GQE}. In fact, writing the metric coefficient $\wht{\cc}$ of \Ref{dual-coef} explicitly, we have
$$\wht{\cc}=\t^2\left(\cc+\t^{-1}\Lap\t-(n-1)\t^{-2}|\nab\t|^2+\t^{-1}\t'|\nab f|^2\right),$$
with $\t=e^{\frac {2f}{n-2}}$. Using the latter relation along with $|\nab\t|^2=(\t' )^2|\nab f|^2$ and $\Lap\t=\t'\Lap f+\t''|\nab f|^2$ gives
$$\wht{\cc}=e^{\fr {4f}{n-2}}\left(\cc+\fr 2{n-2}\Lap f+\left(-4\frac {n-1}{(n-2)^2}+\frac 2{n-2}+\frac 4{(n-2)^2}\right)|\nab f|^2\right),$$
and so \be\lb{const}\wht{\cc}=e^{\fr {4f}{n-2}}\left(\cc+\fr 2{n-2}\left(\Lap f-|\nab f|^2\right)\right).\end{equation}

Equations \Ref{const} and \Ref{sol-const} both hold exactly when \be\lb{lap-equal}\frac {n-2}2(\wht{\cc}\,e^{\fr {-4f}{n-2}}-\cc)=-2\cc f+k.\end{equation}
Applying the operator $d$ to this equation, with $\cc$ and $\wht{\cc}$ constant, yields
that solutions only occur for $n\neq 2$ if $f$ is constant or $\cc=\wht{\cc}=0$. In the latter case,
we proceed as in \cite{GQE}, Namely, we have $\Lap f-|\nab f|^2=0$ by \Ref{const}. Thus, because the scalar curvature $R$ of a steady
($\cc=0$) soliton satisfies $R=-\Lap f$, we get $-R-|\nab f|^2=0$.
Now it is known \cite{ch} that for $g$ a complete steady soliton, $R\geq 0$. Consequently, in
that case $|\nab f|^2=0$, so that again $f$ must be constant.
Thus if it is not constant, $g$ must be incomplete, and since $g$ and $\wht{g}$ are symmetric
with respect to the involution \Ref{inv}, the metric $\wht{g}$ is also
incomplete. Finally, from Equation \Ref{lap-equal}, if
$\cc=\wht{\cc}=0$ then $k=0$ as well.
\end{proof}


\section{The K\"ahler case}
\subsection{Generalities}
The K\"ahler condition forces an almost soliton to be a soliton.
\begin{Proposition}\lb{Kahler-almost}
Let $M$ be a manifold of even dimension $n\geq 4$.
A gradient Ricci almost soliton pair $(g,f)$  on $M$, for which $g$ is K\"ahler with respect to some
complex structure $J$, is in fact a gradient K\"ahler-Ricci soliton.
\end{Proposition}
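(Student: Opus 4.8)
The plan is to show that the almost-soliton coefficient $\cc$ is forced to be constant; since a Kähler metric satisfying \Ref{sol} with constant $\cc$ is by definition a gradient Kähler-Ricci soliton, this suffices. The mechanism is to transport the symmetric-tensor equation \Ref{sol} into an identity among closed $2$-forms by contracting with $J$, and then to differentiate.

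First I would record that, because $g$ is Kähler, both $g$ and $\ri$ are $J$-invariant, i.e.\ invariant under $(X,Y)\mapsto(JX,JY)$. The almost-soliton equation $\ri+\nab df=\cc\,g$ then forces the Hessian $\nab df$ to be $J$-invariant as well. This lets me associate to each of these tensors a $2$-form via the rule $h\mapsto h(J\cdot,\cdot)$: the Ricci form $\rho:=\ri(J\cdot,\cdot)$, the Kähler form $\om=g(J\cdot,\cdot)$, and $\psi:=(\nab df)(J\cdot,\cdot)$; each is genuinely antisymmetric precisely because the corresponding tensor is $J$-invariant. Applying this operation to \Ref{sol} yields the $2$-form identity $\rho+\psi=\cc\,\om$.

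The next step is to differentiate this identity. Two of the three forms are closed for standard reasons: $\om$ is closed because $g$ is Kähler, and $\rho$ is closed because the Ricci form of a Kähler metric is closed. For $\psi$ I would verify directly, using $\nab J=0$, the symmetry of the Hessian, and the $J$-invariance just established, that $\psi=-\tfrac12\,d(df\circ J)$, so that $\psi$ is in fact exact and in particular $d\psi=0$. Taking $d$ of $\rho+\psi=\cc\,\om$ then gives $0=d\cc\we\om$.

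Finally, I would invoke the nondegeneracy of the Lefschetz operator $\beta\mapsto\beta\we\om$ on $1$-forms. On a Kähler manifold of real dimension $n=2m$ this map is injective on $1$-forms provided $m\geq2$, i.e.\ $n\geq4$; hence $d\cc\we\om=0$ forces $d\cc=0$, so $\cc$ is constant. The main obstacle I anticipate is the computational verification that $\psi$ is closed, equivalently that $(\nab df)(J\cdot,\cdot)$ is a multiple of $d(df\circ J)$: this is where the Kähler identity $\nab J=0$ and the $J$-invariance of $\nab df$ must be combined carefully. The hypothesis $n\geq4$ enters exactly through the Lefschetz injectivity in the last step, which fails when $n=2$.
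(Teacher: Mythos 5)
Your proof is correct and takes essentially the same route as the paper: contract the almost-soliton equation with $J$ to obtain the $2$-form identity $\rho+\psi=\cc\,\om$, use closedness of the K\"ahler and Ricci forms together with closedness of the Hessian $2$-form to get $0=d\cc\we\om$, and conclude by injectivity of $\we\,\om$ on $1$-forms in real dimension $\geq 4$. The only difference is that where the paper cites Lemmas 5.2 and 5.5 of \cite{local} (passing through the fact that $f$ is a Killing potential) to obtain $\nab df(J\cdot,\cdot)=d(\imath_{\nab f}\om)/2$, you verify the equivalent identity $\psi=-\tfrac12\,d(df\circ J)$ by a direct computation, which does go through exactly as you describe using $\nab J=0$, torsion-freeness, and the $J$-invariance of $\nab df$.
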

This follows from Lemma 6.1(ii) of \cite{local}. For completeness, we give the proof
in a form that resembles Proposition 3.3 of \cite{confsol}. For this, recall that a Killing potential
$f$ on a K\"ahler manifold $(M,J,g)$ (with $J$ the almost complex structure), is a smooth function for which $u:=J\nab f$
is a Killing vector field, i.e. $L_ug=0$ where $L_u$ is the Lie derivative.
\begin{proof}
From \Ref{sol}, since $g$ and the Ricci curvature $\ri$ are hermitian, so is $\nab df$.
By \cite[Lemma 5.2]{local}, this implies that $f$ is a Killing potential.
By \cite[Lemma 5.5]{local}, this in turn implies that $\nab df(J\cdot,\cdot)=d(\imath_{\nab f}\om)/2$.

Now compose Equation \Ref{sol} with $J$ and apply the differential operator $d$ to the result.
Using the conclusion of the previous paragraph and the fact that the K\"ahler form $\om$ and the Ricci form are closed,
one arrives at $0=d\cc\we\om$. But the operation $\we\om$ is injective on $1$-forms in dimensions four and above.
Hence $d\cc=0$.
\end{proof}
Note that for {\em any} gradient Ricci almost soliton it is known that the metric coefficient function is
locally a function of the soliton function (see Remark 2.5 in \cite{PRRS}).

As mentioned in the introduction, unlike the situation for the involution \Ref{old-dual},
Propositions \ref{duality}, \ref{c-hat-c} and \ref{Kahler-almost} together imply
that in the K\"ahler setting, the property of having a dual complex structure does not hold for the involution \Ref{inv}:
\begin{Corollary}\lb{two-Kahler}
Given a nontrivial complete gradient K\"ahler-Ricci soliton $g$ with soliton function $f$, the metric $e^{\frac {-4f}{n-2}}g$
is not K\"ahler with respect to any complex structure.
\end{Corollary}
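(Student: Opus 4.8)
The plan is to argue by contradiction, chaining together the three propositions just established. Suppose $\wht{g}=e^{-4f/(n-2)}g$ were K\"ahler with respect to some complex structure $J'$. Writing $\wht{g}=g/\t^2$ with $\t=e^{2f/(n-2)}$, Proposition \ref{duality} already tells us that $(\wht{g},-f)$ is a gradient Ricci \emph{almost} soliton pair. The first move is to upgrade this ``almost'' to a genuine soliton structure.

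For that upgrade I would invoke Proposition \ref{Kahler-almost}. Since $g$ is K\"ahler the dimension $n$ is even, and as $n>2$ we have $n\geq 4$, so the hypotheses of that proposition are met. Applying it to the pair $(\wht{g},-f)$ — which is an almost soliton pair by the previous step and, by assumption, has $\wht{g}$ K\"ahler with respect to $J'$ — forces $(\wht{g},-f)$ to be an honest gradient K\"ahler-Ricci soliton, and in particular a gradient Ricci soliton pair. It is irrelevant here that $J'$ need not coincide with the original complex structure $J$, because Proposition \ref{c-hat-c}, which I use next, is a purely Riemannian statement about solitons.

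At this stage both $(g,f)$ and its image $(\wht{g},-f)$ under the involution \Ref{inv} are gradient Ricci soliton pairs, so Proposition \ref{c-hat-c} applies. It leaves only two possibilities: either $f$ is constant, or $g$ and $\wht{g}$ are both incomplete steady solitons. The first contradicts the assumed nontriviality of $g$, and the second contradicts its completeness. Both alternatives being excluded, the assumption that $\wht{g}$ is K\"ahler must fail, which is precisely the assertion.

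The argument involves no computation of its own; the entire content is the observation that the K\"ahler hypothesis on the \emph{target} is exactly what converts the almost-soliton output of Proposition \ref{duality} into the soliton input demanded by Proposition \ref{c-hat-c}. The only point that requires care — and the sole place something could go wrong — is the dimension bookkeeping needed to license Proposition \ref{Kahler-almost}, namely that a K\"ahler metric on $M$ with $n>2$ is automatically of even dimension $n\geq 4$. Beyond that, the corollary is a direct concatenation of the three cited propositions.
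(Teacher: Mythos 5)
Your proof is correct and is essentially the paper's own argument: the paper likewise notes that the dual metric is an almost soliton by Proposition \ref{duality}, that the K\"ahler hypothesis upgrades it to a gradient K\"ahler-Ricci soliton via Proposition \ref{Kahler-almost}, and that Proposition \ref{c-hat-c} then forbids $g$ from being both complete and nontrivial. Your added remarks on the dimension count ($n$ even and $n>2$ forcing $n\geq 4$) and on the irrelevance of the new complex structure $J'$ are correct points of care that the paper leaves implicit.
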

In fact, we know that the metric in question is an almost soliton. If it were K\"ahler, it would be a K\"ahler-Ricci soliton. Both
this metric and $g$ would thus be gradient Ricci solitons, but in that case $g$ cannot be both complete and nontrivial.

The propositions above can be applied to the study of the setting analogous to the one considered in \cite{confsol},
namely that of a K\"ahler metric conformal to a gradient Ricci almost soliton.
Note that, as the latter is the target metric in this setting, in the next proposition our notations for the metrics
are reversed from those in the rest of this work.
\begin{Proposition}
Suppose $g$ is a K\"ahler metric on a complex manifold $(M,J)$ of dimension $n\geq 4$,
which is conformal to a gradient Ricci almost soliton $\wht{g}:=g/\t^2$ with
soliton function $f$. If $f$ is a $g$-Killing potential and $\t$ is locally a function of $f$ with $\t(0)=1$ and
$\t'(0)=-\fr 2{n-2}$, then $g$ is a K\"ahler-Ricci soliton.
\end{Proposition}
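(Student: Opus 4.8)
The plan is to insert the conformal-change identities into the almost soliton equation for $\wht g$ and then let the K\"ahler geometry dictate the form of $\t$. Since $\wht g=g/\t^2$ and $\t$ is locally a function of $f$, formulas \Ref{Ricci2} and \Ref{hes-f}, read with $g$ as the base metric, express $\wht\ri$ and $\wht\nab df$ through the $g$-Ricci tensor $\ri$, the $g$-Hessian $\nab df$, the tensor $df\otimes df$, and $g$. Writing out $\wht\ri+\wht\nab df=\wht\cc\,\wht g$ and collecting the multiples of $g$ on the right, I would arrive at
\[
\ri+A\,\nab df+B\,df\otimes df=\mu\,g,
\]
where $A=(n-2)\t^{-1}\t'+1$, $B=(n-2)\t^{-1}\t''+2\t^{-1}\t'$ and $\mu=\wht\cc\,\t^{-2}-\bet+\delta$ are functions of $f$, with $\bet$, $\delta$ as in Proposition \ref{duality}.

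The decisive step is a hermitian argument modeled on the proof of Proposition \ref{Kahler-almost}. As $g$ is K\"ahler, $\ri$ is hermitian; as $f$ is a $g$-Killing potential, \cite[Lemma 5.2]{local} makes $\nab df$ hermitian; and $g$ is hermitian. Hence the anti-hermitian part of the remaining term $B\,df\otimes df$ must vanish. Setting $\eta:=df\circ J$, one has $(df\otimes df)(J\cdot,J\cdot)=\eta\otimes\eta$, so this anti-hermitian part is $\half B\,(df\otimes df-\eta\otimes\eta)$; evaluating $df\otimes df-\eta\otimes\eta$ on the pair $(\nab f,\nab f)$ gives $|\nab f|^4$, since $\eta(\nab f)=g(\nab f,J\nab f)=0$ while $df(\nab f)=|\nab f|^2$. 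Thus $df\otimes df-\eta\otimes\eta$ is nonzero wherever $\nab f\neq0$, forcing $B=0$ there, and since $B$ is a function of $f$ this is the linear ODE
\[
(n-2)\,\t''+2\,\t'=0
\]
along the range of $f$ (the case $f$ constant being trivial, as $\wht g$ is then a homothety of an Einstein K\"ahler metric).

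Integrating gives $\t=c_1e^{-\fr{2f}{n-2}}+c_2$, and here the hypotheses $\t(0)=1$, $\t'(0)=-\fr2{n-2}$ enter decisively: they force $c_1=1$, $c_2=0$, whence $\t=e^{-\fr{2f}{n-2}}$ and $\t^{-1}\t'\equiv-\fr2{n-2}$ is constant. Consequently $A=(n-2)(-\fr2{n-2})+1=-1$, and the displayed identity collapses to $\ri+\nab d(-f)=\mu\,g$. This exhibits $(g,-f)$ as a gradient Ricci almost soliton pair whose metric $g$ is K\"ahler, so Proposition \ref{Kahler-almost} applies and shows that $\mu$ is in fact constant; thus $g$ is a K\"ahler-Ricci soliton.

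I expect the main obstacle to be the hermitian bookkeeping that isolates $B\,df\otimes df$ together with the verification that its anti-hermitian part is genuinely nonzero; once $B=0$ is secured, the prescribed initial conditions serve only to eliminate the additive constant $c_2$, which is precisely what makes the coefficient $A$ constant and turns the resulting equation into a bona fide soliton equation.
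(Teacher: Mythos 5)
Your proposal is correct and follows essentially the same route as the paper: insert the conformal-change formulas into the almost soliton equation for $\wht{g}$, use the hermitian symmetry of $\ri$, $\nab df$ and $g$ to force the coefficient of $df\otimes df$ to vanish, solve the resulting ODE with the given initial conditions to get $\t=e^{-2f/(n-2)}$, and finish with Proposition \ref{Kahler-almost}. The only cosmetic differences are that you verify explicitly that the anti-hermitian part of $df\otimes df$ is nonzero where $\nab f\neq 0$ (a point the paper leaves implicit) and that you read the almost soliton equation for $(g,-f)$ directly off the displayed identity, whereas the paper invokes Proposition \ref{duality} with $g$ and $\wht{g}$ interchanged.
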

\begin{proof}
Since $g$ is conformal to an almost soliton, \Ref{hess} and \Ref{Ricci} yield
\begin{eqnarray*}
r &+& (n-2)\t^{-1}\nab d\t+\nab df+2\t^{-1}d\t\odot df\\
&=&[\wht{\cc}\t^{-2}+(n-1)\t^{-2}|\nab\t|^2-\t^{-1}\Lap\t+\t^{-1}g(\nab\t,\nab f)]g,
\end{eqnarray*}
with $\wht{\cc}$ the almost soliton coefficient function of $(\wht{g},f)$. If $d\t\we df=0$, we regard $\t$ as a function
of $f$, and this expression becomes, in similarity with Proposition \ref{duality},
$$\ri\,+((n-2)\t^{-1}\t'+1 )\,\nabla df
+\left((n-2)\,\t^{-1}\t''+\,2\t^{-1}\t'\right) df\otimes df
=\left(\wht{\cc}\t^{-2}-\bet+\delta\right)g,$$
with $\bet$, $\delta$ as in that proposition.
As $f$ is a Killing potential for the K\"ahler metric $g$, the Hessian $\nab df$ is hermitian with
respect to the associated complex structure, and thus
all the terms in the last equation are hermitian except for $df\otimes df$.
Thus for the latter term, the coefficient $(n-2)\,\t^{-1}\t''+\,2\t^{-1}\t'$ must vanish, giving an ODE whose
solution with the given initial conditions is $\t=\exp (-2f/(n-2))$.
We can now apply Proposition \ref{duality} (with $\wht{g}$ and $g$ interchanged), to conclude that
$g$ is a gradient Ricci almost soliton. As $g$ is K\"ahler, Proposition \ref{Kahler-almost}
yields that it is in fact a gradient K\"ahler-Ricci soliton.
\end{proof}

\subsection{Example: Ricci solitons that admit a special K\"ahler-Ricci potential}

Metrics with special K\"ahler-Ricci potentials have been studied in \cite{local,skrp}
and \cite{confsol}. If a gradient Ricci soliton also admits such a potential,
it is of a type first considered by Koiso \cite{ko} , and also Cao \cite{ca}.
We give here an independent viewpoint on such metrics and then consider their dual under
the map induced by \Ref{inv}.

A K\"ahler metric $g$ on a complex manifold $(M,J)$ admits a special K\"ahler-Ricci potential $f$ if
$f$ is a Killing potential and, at each noncritical point of $f$, all nonzero tangent vectors
orthogonal to the complex span of $\nab f$ are eigenvectors of both the Ricci tensor and the Hessian
of $f$. This rather technical definition implies, by \cite[Remark 7.4]{local}, the existence on an open set,
of a Ricci-Hessian equation \Ref{ric-hes}, which we reproduce here:
\be\lb{ric-hes-2}\ri+\al\nab df=\cc\, g.
\end{equation}
The coefficients $\al$, $\cc$ are smooth functions, and in fact locally functions of $f$.
The metric $g$ is called an \sk\ metric and $(g,f)$ is called an \sk\ pair.

We turn next to the local classification of metrics with a special K\"ahler-Ricci potential.
We consider only the non-trivial case, of metrics that are not
local products of K\"ahler metrics. In this case the metrics come in the following families involving
one free function, described using a variant of the Calabi construction.

Let $\pi:(L,\langle\cdot,\cdot\rangle)\ra (N,h)$ be a Hermitian holomorphic line bundle
over a K\"ahler-Einstein manifold of complex dimension $m-1$. Assume that the
curvature of $\langle\cdot,\cdot\rangle$ is a multiple of the K\"ahler form of $h$.
Consider, on $L\ssminus N$ (the total space of $L$ excluding the zero section),
the metric $g$ given by
\be\lb{metric-form}
g|_{\cal{H}}=2|f_c|\,\pi^*h, \quad g|_{\cal{V}}=\fr {Q(f)}{(p\cdot \ell)^2}\,
\mathrm{Re}\,\langle\cdot,\cdot\rangle,
\end{equation}
where\\
-- $\cal{V},\cal{H}$ are the vertical/horizontal distributions of $L$, respectively,
the latter determined via the Chern connection of $\langle\cdot,\cdot\rangle$,\\
-- $\ell$ is the norm induced by $\langle\cdot,\cdot\rangle$,\\
-- $f$ is a function on $L\ssminus N$ obtained as follows: one fixes an open interval $I$ and a positive
$C^\infty$ function $Q(f)$ on $I$,
solves the differential equation $(a/Q)\,df=d(\log \ell)$ to obtain a diffeomorphism
$\ell(f):I\ra (0,\infty)$, and defines $f(\ell)$  as the inverse of this diffeomorphism
(composed on the norm $\ell$),\\
-- $c$ and $p\neq 0$ are constants and $f_c=f-c$.\\

For $m\geq 2$, the pair $(g,f)$ is a nontrivial \sk\ pair.
Conversely, for any nontrivial \sk\ metric $(M,J, g,f)$ with $m>2$, any point
that is not a critical point for $f$ has a neighborhood biholomorphically isometric to
an open set in some triple $(L,g,f(\ell))$ as above (this is a special case of
\cite[Theorem 18.1]{local}).

The function $Q$ of a nontrivial \sk\ metric as above is given by $Q(f)=2f_c\,\phi (f)$,
where $\phi (f)$ is a solution to the ordinary differential equation derived from \Ref{ric-hes-2}.
Namely,
\begin{equation}\lb{mek}
(f_c)^2\phi''+\,(f_c)[ m-(f_c)\al]\phi'-\,m\phi\,=\,-\mathrm{sgn}(\phi)\kappa/2
\end{equation}
holds at points which are not critical for $f$ and for which  $\phi'(f)$ is nonzero,
with $\kappa$ the Einstein constant of the metric $h$ \cite[Proposition 4.1]{confsol}.

We now suppose that the Ricci-Hessian equation of an \sk\ metric also defines a gradient
Ricci-soliton, in other words $\al=1$ and $\cc$ is constant in \Ref{ric-hes-2}.
Substituting $\al=1$ in \Ref{mek}, the resulting differential equation has solutions $\phi(f)$, which we
take for positive $\phi$, given by
\be\lb{solsols}
\phi(f)=\frac 1 {f_c^m}\left(A\sum_{k=0}^m\frac {f_c^k}{k!}+Be^{f_c}\right)-\frac \kappa {2m}
\end{equation}
for constants $A$, $B$. One can verify that for these $\phi$ the function $\cc$ is indeed constant,
as it must be by Proposition \ref{Kahler-almost}, from the formula
$$\cc=\al\phi+\left(\al f_c-(m+1)\right)\phi'-f_c\phi'',$$
valid for any such \sk\ metric (see \cite[Section 4.2]{confsol}).

The almost soliton dual under \Ref{inv} to an \sk\ metric of this type is thus given by
\be\lb{metric-dual}
\wht{g}|_{\cal{H}}=2|f_c|e^{\frac {-4f}{n-2}}\,\pi^*h, \quad \wht{g}|_{\cal{V}}=\fr {Q(f)}{(p\cdot \ell)^2}\,e^{\frac {-4f}{n-2}}\,
\mathrm{Re}\,\langle\cdot,\cdot\rangle.
\end{equation}
We now examine conditions under which $\wht{g}$ is complete. Assuming the
manifold $N$ is compact, it is enough to check completeness
for the $\wht{g}$-geodesics normal to, say, the zero section of $L$, an $f$-critical manifold.
Since the conformal factor is a function of $f$, it follows that these geodesics
coincide, as unparametrized curves, with the $g$-geodesics normal to the zero section.

In fact, unparametrized  $g$-geodesics form integral curves of $v:=\nabla f$ (see \cite[Section 8]{skrp}), and
thus $\nabla_vv=\alpha v$ for some function $\alpha$. Now for vector fields $x$, $y$ on $M$, we have the conformal change formula
$\wht{\nabla}_xy=\nabla_xy-(d_x f)y-(d_y f)x+g(x,y)\nabla f$. Thus  $\wht{\nabla}_vv=(\alpha-2d_vf+|\nab f|^2)v:=\beta v$, and therefore the
$\wht{g}$-geodesics also coincide with integral curves of $\nabla f$.

To obtain the arc length formula, we first reparametrize an integral curve
$\tilde{x}(t)$ of $\nab f$ so that $f$ itself is the new parameter,
giving the form $x(f)$. Then the velocity is given by
$x'(f)=\psi'(f)\tilde{x}'(\psi(f))=\psi'(f)\nabla f|_{\tilde{x}(\psi(f))}$, where $t=\psi(f)$ is the reparametrization
map and the prime denotes differentiation. To compute $\psi'(f)$ we apply $g(\cdot,\nab f)$ to this equation, giving
$$1=\frac d{df} f = d_{x'} f = g(x',\nabla f) =\psi'(f)g(\nabla f,\nabla f):=\psi'(f)Q.$$
Thus the velocity $x'$ is given by $\frac 1 Q\nabla f$. (The notation $Q$ corresponds to
$Q(f)$, as in fact the expression $g(\nabla f,\nabla f)$ is locally a function of $f$ (see \cite[Lemmas 7.5 and 11.1]{local}),
namely $Q(f)$ of Equation \Ref{metric-dual}.) Hence the $\wht{g}$-arc length $s$ of the curve $x(f)$
is characterized by $$\frac {ds}{df}=\sqrt{\wht{g}(x'(f),x'(f))}=\sqrt{e^{-\frac{4f}{n-2}}g(x'(f),x'(f))}=e^{-\frac{2f}{n-2}}\sqrt{\frac 1{Q^2} Q}=
e^{-\frac{2f}{n-2}}\frac 1{\sqrt{Q}}.$$
Completeness thus depends on the non-integrability of $\displaystyle{\int_{\inf f}^{\sup f} e^{-\frac{2f}{n-2}}\frac 1{\sqrt{Q}}df}$,
where the infimum and supremum are with respect to the range of values of $f$.

We distinguish a few cases. If the range of $f$ is a finite interval, we focus, say, on the endpoint $a:=\inf f$.
An infinite contribution to the integral near $a$ can clearly only occur if $a$ is a zero of $Q$. Assuming we have such a zero,
if $Q'(a)\ne 0$ then $1/\sqrt{Q}$
is asymptotic to $(f-a)^{-1/2}$ near $a$, so that the integral is finite. However, under these conditions $g$ extends smoothly
to the level set $\{f=a\}$ (see Remark $4.3$ and Lemma $4.4$ of \cite{skrp}), hence $\wht{g}$ extends as well and the
finite length curve $x(f)$ has a limit at $a$, so that, assuming the same behaviour at $\sup f$, we see that $\wht{g}$ is complete
(in fact the manifold is then compact).

Next, if $Q(a)=Q'(a)=0$, then $1/\sqrt{Q}$ is asymptotic to $(f-a)^p$ with $p\leq -1$ so that the integral diverges and $x(t)$
has infinite length to the critical set. If this occurs also at $\sup f$ then $\wht{g}$ is complete.

There remains the case where, say, $\sup f=\infty$. Making the substitution $u=1/f_c$ gives, using the explicit
expression \Ref{solsols} for $\phi(f)$ in $Q(f)=2f_c\,\phi(f)$:
\begin{multline}
\int_{1}^{\infty} e^{-\frac{2f}{n-2}}\frac 1{\sqrt{Q}}df =
- \int_0^1 e^{-\frac 1{m-1}u^{-1}}e^\frac c{m-1}
u^{-\fr {m-1}2}\left(A\textstyle{\sum}_{k=0}^m\frac {u^{-k}}{k!}+Be^{(u^{-1})}\right)^{-\fr 12}u^{-2}\,du.
\end{multline}
Assuming $B\ne 0$ (the analysis of the case $B=0$ is similar), neglecting the polynomial in $u^{-1}$, the integrand of the last expression is asymptotic to
$$\exp\left(\left(-\frac 1{m-1}-\frac 12\right)u^{-1}\right) u^{-\left(\frac {m-1}2+2\right)}$$
as $u\rightarrow 0^+$.
Over, say, the interval $[0,1]$, the integral of this quantity converges for $m>1$, so that $\wht{g}$ is incomplete.

\end{document}